\theoremstyle{plain}
\newtheorem{theorem}{Theorem}
\newtheorem{lemma}[theorem]{Lemma}
\newtheorem{corollary}[theorem]{Corollary}
\newtheorem{conjecture}[theorem]{Conjecture}
\newtheorem{proposition}[theorem]{Proposition}
\theoremstyle{definition}
\theoremstyle{remark}
\newcommand{\Mp}{\ensuremath{\mathrm{mp}}}
\newcommand{\Bc}{\ensuremath{\gamma_b}}
\newcommand{\D}{\text{diam}}
\newcommand{\R}{\text{rad}}
\renewcommand{\ell}{l}
\tikzstyle{vertex}=[circle, fill=black, inner sep= 0, minimum size = 4]
\tikzstyle{unselected}=[circle, draw, fill=white, inner sep= 0, minimum size = 4]
\tikzstyle{unknown}=[circle, fill=black, inner sep= 0, minimum size = 2]
\title{Broadcast domination and multipacking:\\bounds and the integrality gap}
\author{L.~Beaudou\footnote{LIMOS, Universit\'e Clermont Auvergne, Aubi\`ere (France). E-mails: laurent.beaudou@uca.fr, florent.foucaud@gmail.com} \and R.~C.~Brewster\footnote{Department of Mathematics and Statistics, Thompson Rivers University, Kamloops (Canada). E-mail: rbrewster@tru.ca} \and F.~Foucaud\footnotemark[1]}
\begin{document}

\maketitle


\begin{abstract}
  The dual concepts of coverings and packings are well studied in
  graph theory.  Coverings of graphs with balls of radius one and
  packings of vertices with pairwise distances at least 
  two are the well-known concepts of domination and independence, respectively.
  In 2001, Erwin introduced \emph{broadcast domination} in graphs, 
  a covering problem using balls of various radii
  where the cost of a ball is its radius. The minimum cost of a
  dominating broadcast in a graph $G$ is denoted by $\Bc(G)$. 
  The dual (in the sense of linear programming) 
  of broadcast domination is \emph{multipacking}: 
  a multipacking is a set $P \subseteq V(G)$ such that for any vertex $v$ 
  and any positive integer $r$, the ball of radius $r$ around $v$ contains at most $r$
  vertices of $P$. The maximum size of a multipacking in a graph $G$
  is denoted by $\Mp(G)$. Naturally, $\Mp(G) \leq \Bc(G)$.   
  Hartnell and Mynhardt proved that \mbox{$\Bc(G) \leq 3 \Mp(G) - 2$} 
  (whenever $\Mp(G)\geq 2$). In this paper, we show that
  \mbox{$\Bc(G) \leq 2\Mp(G) + 3$}.  Moreover, 
  we conjecture that this can be improved to $\Bc(G) \leq 2\Mp(G)$
  (which would be sharp). 
  \end{abstract}


\section*{Introduction}

The dual concepts of coverings and packings are well studied in
graph theory.  Coverings of graphs with balls of radius one and
packings of vertices with pairwise distances at least 
two are the well-known concepts of domination and independence respectively.
Typically we are interested in minimum (cost) coverings and maximum 
packings.  Natural questions to ask are for what graph do these dual
problems have equal (integer) values, and in the case they are not equal, can we
bound the difference between the two values? The second question is 
the focus of this paper.  

The particular covering problem we study is broadcast domination.
Let $G=(V,E)$ be a graph.
Define the \emph{ball of radius $r$ around $v$} by 
$N_r(v) = \{ u : d(u,v) \leq r \}$.  A \emph{dominating broadcast} of $G$
is a collection of balls $N_{r_1}(v_1), N_{r_2}(v_2), \dots, N_{r_t}(v_t)$
(each $r_i > 0$) such that $\bigcup_{i=1}^t N_{r_i}(v_i) = V$.
Alternatively, a dominating broadcast is a function $f: V \to \mathbb{N}$ 
such that for any vertex $u \in V$, there is a vertex $v \in V$ 
with $f(v)$ positive and $\mathrm{dist}(u,v) \leq f(v)$.   (The ball around
$v$ with radius $f(v)$ belongs to the covering.)
The \emph{cost} of a dominating broadcast $f$ is 
$\sum_{v \in V} f(v)$ and the minimum cost of a dominating broadcast in $G$, 
its \emph{broadcast number}, is denoted by $\Bc(G)$.\footnote{
  One may consider the cost to be any function of the powers
  (for example the sum of the squares), see e.g.~\cite{HeggernesLokshtanov2006}. 
  We shall stick to the classical convention of linear cost.
}

When broadcast
domination is formulated as an integer linear program, its dual
problem is \emph{multipacking}~\cite{Brewster2013,Teshima2012}. A
multipacking in a graph $G$ is a subset $P$ of its vertices such that
for any positive integer $r$ and any vertex $v$ in $V$, the ball of
radius $r$ centered at $v$ contains at most $r$ vertices of $P$. The
maximum size of a multipacking of $G$, its \emph{multipacking number},
is denoted by $\Mp(G)$.

Broadcast domination was introduced by Erwin~\cite{Erwin2001,
Erwin2004} in his doctoral thesis in 2001. Multipacking was then
defined in Teshima's Master's Thesis~\cite{Teshima2012} in 2012, see
also~\cite{Brewster2013}
(and~\cite{Brewster2017,hartnell_2014,Yang2015} for subsequent
studies). As we have already mentioned, this work fits into the 
general study of coverings and packings, which has a rich history 
in Graph Theory: Cornu\'ejols
wrote a monograph on the topic~\cite{Cornuejols2001}.

In early work, Meir and Moon~\cite{MeirMoon1975} studied 
various coverings and packings in trees, providing several inequalities relating 
the size of a minimum covering and a maximum packing.  Giving such inequalities 
connecting the parameters $\Bc$ and $\Mp$ is the focus of our work.
Since broadcast domination and multipacking are dual problems,
we know that for any graph $G$,
\begin{equation*}
  \Mp(G) \leq \Bc(G).
\end{equation*}

This bound is tight, in particular for strongly chordal graphs,
see~\cite{Farber84,Lubiw87,Teshima2012}.  
(In a recent companion work we prove equality
for grids~\cite{Beaudou2018}.)
A natural question comes to mind. How far
apart can these two parameters be? Hartnell and
Mynhardt~\cite{hartnell_2014} gave a family of graphs $(G_k)_{k \in
  \mathbb{N}}$ for which the difference between both parameters is
$k$. In other words, the difference can be arbitrarily
large. Nonetheless, they proved that for any graph $G$ with
$\Mp(G)\geq 2$,
\begin{equation*}
  \Bc(G) \leq 3 \Mp(G) - 2
\end{equation*} 
and asked~\cite[Section~5]{hartnell_2014} whether the factor~$3$ can
be improved. Answering their question in the affirmative, our main
result is the following.
\begin{theorem}
  \label{thm:bounding}
  Let $G$ be a graph. Then,
  \begin{equation*}
    \Bc(G) \leq 2 \Mp(G) + 3.
  \end{equation*}
\end{theorem}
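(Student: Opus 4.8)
\section*{Proof proposal}

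The plan is to bound $\Bc(G)$ in terms of the radius of $G$ (or a slight variant thereof), and then to bound the radius in terms of $\Mp(G)$. The first half is easy: if $G$ has radius $\R(G)$, then a single ball of radius $\R(G)$ centered at a central vertex already dominates $G$, so $\Bc(G) \le \R(G)$. For disconnected graphs one works component by component, so it suffices to treat the connected case; we will need to be slightly careful because one component may have large diameter while contributing few multipacking vertices, but a diametral path provides multipacking vertices at a linear rate, which is exactly what we want. So the heart of the matter is a lower bound of the form $\Mp(G) \gtrsim \tfrac12 \R(G)$, or more precisely $\R(G) \le 2\Mp(G) + O(1)$.

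To get such a lower bound I would construct an explicit multipacking from a shortest path realizing the diameter. Let $P = v_0 v_1 \cdots v_d$ be a geodesic with $d = \D(G)$, and consider taking every other vertex, or more cleverly, a carefully chosen arithmetic-progression-like subset of $\{v_0, \dots, v_d\}$ together with possibly the endpoints. The key point is that along a geodesic, $d(v_i, v_j) = |i-j|$, so a ball of radius $r$ centered anywhere meets the path in an interval of length at most $2r+1$ (in index), hence contains at most $r$ of the chosen path-vertices provided consecutive chosen indices differ by at least $2$ and we are slightly careful at the ends. The subtlety is that the ball may be centered off the path, and then "pulling" its center onto the path can shrink the effective radius with respect to the path vertices; this is where the multipacking condition (which must hold for balls around \emph{every} vertex, not just path vertices) bites. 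I expect one can show that choosing vertices $v_1, v_3, v_5, \dots$ along the geodesic yields a multipacking of size roughly $d/2$, giving $\D(G) \le 2\Mp(G) + c$ for a small constant $c$, and then $\Bc(G) \le \R(G) \le \D(G) \le 2\Mp(G)+c$.

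The main obstacle, and the reason the constant $+3$ rather than $+0$ appears, is precisely the interaction at balls whose centers lie off the geodesic: a vertex $u$ at distance $k$ from the path could be "close" to a long stretch of path vertices if the path doubles back near $u$ in the metric sense — except that it cannot, since $P$ is a geodesic, so $d(u,v_i)$ is roughly $|i - i_0| $ shifted by the distance from $u$ to the path, up to an additive term of at most twice that distance. Quantifying this ``the ball of radius $r$ around any vertex meets the geodesic in a bounded interval'' statement carefully is the crux; it is essentially the statement that geodesics are metrically convex up to a constant, and it forces us either to thin the path-sample a little more aggressively or to absorb a small additive loss, which is the source of the $+3$. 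I would also double check the small cases (small diameter, or $\Mp(G)$ equal to $0$ or $1$) separately, since the path-based construction degenerates there and one must verify the inequality directly, e.g.\ a graph with $\Mp(G)=1$ has bounded radius by a short ad hoc argument.
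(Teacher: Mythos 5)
Your high-level plan ($\Bc(G)\le \R(G)$, then a lower bound on $\Mp(G)$ linear in the radius) matches the paper's strategy, but the construction you propose for the lower bound does not work, and that is exactly where the difficulty lies. Taking every other vertex $v_1,v_3,v_5,\dots$ of a geodesic is not a multipacking: already the ball of radius $1$ centred at $v_2$ contains the two chosen vertices $v_1$ and $v_3$, and more generally a ball of radius $r$ centred on the path can contain $r+1$ chosen vertices. The multipacking constraints force consecutive chosen vertices on a geodesic to be at mutual distance at least $3$, so the density extractable from a single path is $1/3$, not $1/2$; this is precisely the paper's Lemma~\ref{lem:path} (every third vertex, at most $\lceil(2r+1)/3\rceil$ per ball of radius $r$), and by itself it only yields $\Mp(G)\ge\lceil(\D(G)+1)/3\rceil$, i.e.\ the known factor-$3$ bound of Hartnell and Mynhardt. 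Moreover your final chain $\Bc(G)\le\R(G)\le\D(G)\le 2\Mp(G)+c$ cannot be repaired: for long paths $\D(G)\approx 3\Mp(G)$, so $\D(G)\le 2\Mp(G)+c$ is false in general; the inequality that is true, and that the paper proves, is $\R(G)\le 2\Mp(G)+3$. The obstruction is also not the one you single out (balls centred off the path): isometry of the path disposes of that immediately, as in Lemma~\ref{lem:path}.

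The missing idea is to use two geodesics rather than one. The paper takes a diametral geodesic of length about $d=\D(G)$ and selects every third vertex on it (about $d/3$ vertices); then, from the midpoint $x$ of that geodesic, it uses the fact that the eccentricity of $x$ is at least $r=\R(G)$ to choose a second geodesic from $x$ of length at least $r$, and selects every third vertex on the portion of that path at distance more than $d/2$ from $x$, gaining roughly $(r-d/2)/3$ further vertices. The substantive work (Theorem~\ref{thm:main}) is verifying that the union of the two selections is still a multipacking, via a case analysis on balls that meet both paths. This yields $\Mp(G)\ge d/6+r/3-3/2$, and since $d\ge r$ one gets $\Mp(G)\ge r/2-3/2$, hence $\Bc(G)\le\R(G)\le 2\Mp(G)+3$. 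Without some such second path (or another new idea), a construction supported on a single diametral geodesic cannot beat the factor $3$.
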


Moreover, we conjecture that the additive constant in the bound of
Theorem~\ref{thm:bounding} can be removed.

\begin{conjecture}
  \label{conj:fac2}
  For any graph $G$, $\Bc(G) \leq 2 \Mp(G)$.
\end{conjecture}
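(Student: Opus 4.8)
The conjecture is equivalent to the assertion that every graph admits a multipacking of size at least $\lceil \Bc(G)/2 \rceil$, so the plan is to exhibit one. Two ingredients are available from the outset. First, if $P = v_0 v_1 \cdots v_d$ is a diametral path (so $d = \D(G)$), then the ``spacing-three'' set $\{v_0, v_3, v_6, \dots\}$ is a multipacking: along a geodesic the distance between two chosen vertices equals the difference of their indices, so by the triangle inequality a ball of radius $r$ meets at most $\lfloor 2r/3 \rfloor + 1 \le r$ of them, giving $\Mp(G) \ge \lfloor \D(G)/3 \rfloor + 1$. Second, broadcasting from a central vertex with radius $\R(G)$ covers $G$, so $\Bc(G) \le \R(G) \le \D(G)$. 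These facts alone settle the conjecture when $\D(G) \in \{2,3,4,6\}$ and recover a factor-three bound in general, but no further: a single geodesic forces gaps of three (two chosen vertices within distance two already violate the radius-one constraint), so it can never yield density above $1/3$, whereas the tight cases---such as $C_5$ and diameter-two graphs with no dominating vertex, where $\Bc = 2$ and $\Mp = 1$---demand density near $1/2$. The factor two must therefore come from the transverse structure of $G$, not from one path.

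The route I would pursue is an induction designed to erase the additive constant of Theorem~\ref{thm:bounding} through a clean per-step exchange. Fix a diametral path $v_0 \cdots v_d$, place $v_0$ into the multipacking, and reduce $G$ to the portion $G'$ lying beyond a small cap around $v_0$, aiming for a reduction with
\begin{equation*}
  \Bc(G) \le \Bc(G') + 2 \quad\text{and}\quad \Mp(G) \ge \Mp(G') + 1 .
\end{equation*}
The first inequality asks that the discarded end-region be coverable by at most two extra units of broadcast (one ball of radius at most two); the second asks that an optimal multipacking of $G'$, together with $v_0$, be a multipacking of $G$. Granting both, together with the trivial base case $\Bc(G) \le 2$ (where $\Mp(G) \ge 1$ suffices), the induction yields $\Bc(G) \le 2\,\Mp(G)$ with no leftover constant.

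I expect the main obstacle to be that these two inequalities pull against each other under any naive reduction. Deleting a cap can only increase the distances among the surviving vertices, so a multipacking of $G'$---whose validity is checked against the larger $G'$-balls---need not remain a multipacking in $G$, where the balls are smaller and may over-contain; thus the upward transfer of the packing can fail. To repair this one must either restrict to geodesically convex caps, for which the relevant distances are unchanged, or strengthen the induction hypothesis to a statement about graphs carrying a prescribed already-dominated boundary (a ``partial broadcast''), so that the recursion never leaves $G$ and always uses true $G$-distances. Guaranteeing \emph{simultaneously} that the end-region costs at most two and that it releases one compatible packing vertex is the crux. The genuinely hard regime is that of self-centered graphs with $\R(G) = \D(G)$ of moderate value: already $\R = \D = 5$ defeats the diameter bound, which gives only $\Mp(G) \ge 2$ while one needs $\Mp(G) \ge 3$. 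Here neither the single-path multipacking nor the radius broadcast bound is strong enough, and one must extract a larger multipacking from the layered breadth-first structure---interleaving several parallel geodesics rather than relying on one---or else produce a broadcast cheaper than the radius.

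A complementary sanity check comes from linear-programming duality, which underlies the paper's title: the fractional relaxations satisfy $\Mp^{*}(G) = \Bc^{*}(G)$, so the conjecture asserts that the product of the broadcast rounding gap and the multipacking rounding gap never exceeds $2$. Bounding these two gaps separately---one through an integral broadcast extracted from an optimal fractional one, the other through an integral multipacking---offers an alternative attack, and any successful argument must leave $C_5$, the complete bipartite graphs, and the diameter-two non-dominated graphs exactly saturating the bound. I view reconciling the self-centered case with the correct ``partial broadcast'' induction hypothesis as where the real work lies.
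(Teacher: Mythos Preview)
The statement you are attempting is Conjecture~\ref{conj:fac2}, and the paper does \emph{not} prove it: it is posed as an open problem. The paper's contributions toward it are (i)~Theorem~\ref{thm:bounding}, the bound $\Bc(G)\le 2\Mp(G)+3$, and (ii)~the verification of the conjecture for $\Mp(G)\le 4$ in Section~\ref{sec:discussion}. There is therefore no ``paper's own proof'' to compare your attempt against.

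Your write-up is, by your own account, a strategy rather than a proof: you propose an end-peeling induction aiming for $\Bc(G)\le\Bc(G')+2$ and $\Mp(G)\ge\Mp(G')+1$, immediately observe that the two inequalities work against each other, and flag the self-centered regime $\R(G)=\D(G)$ as the hard case. All of this is accurate as a diagnosis, but none of it constitutes an argument. In particular, the key step---guaranteeing that a multipacking of the reduced graph $G'$ together with the peeled endpoint is a multipacking of $G$---is exactly the obstruction, and you offer no mechanism to overcome it. (Incidentally, your description of the obstruction has the ball comparison reversed: deleting vertices can only increase distances, so $G'$-balls are \emph{smaller} than the corresponding $G$-balls, not larger; the transfer fails because the larger $G$-balls may capture too many packing vertices. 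Your conclusion is right, but the stated reason is backward.)

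It may help to see how the paper obtains the $+3$ bound, since it is not by induction at all. The paper constructs a single large multipacking directly, combining two isometric paths---a diametral one and a radial one emanating from its midpoint---and selecting every third vertex on each (Theorem~\ref{thm:main}). A careful case analysis shows the union is a multipacking of size roughly $\D(G)/6+\R(G)/3$, which together with $\Bc(G)\le\R(G)\le\D(G)$ gives $\Bc(G)\le 2\Mp(G)+3$. The small-$\Mp$ cases in Section~\ref{sec:discussion} are handled by ad~hoc distance arguments (Lemma~\ref{lemma-distances}, Propositions~\ref{prop:mp=3} and~\ref{prop:mp=4}). Neither line of argument removes the additive constant in general, and the conjecture remains open in the paper; your proposal does not close that gap either.
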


In Section~\ref{sec:bound}, we prove Theorem~\ref{thm:bounding}. In
Section~\ref{sec:discussion}, we show that Conjecture~\ref{conj:fac2}
holds for all graphs with multipacking number at most~$4$. We conclude the paper with some discussions in Section~\ref{sec:remarks}.

\section{Proof of Theorem~\ref{thm:bounding}}
\label{sec:bound}

We want to bound the broadcast number of a graph by a function of its
multipacking number. We first state a key counting result which is
used throughout the remainder of this paper.

For any two relative integers $a$ and $b$ such that $a \leq b$,
$\llbracket a, b\rrbracket$ denotes the set $\mathbb{Z} \cap [a,b]$.

\begin{lemma}
  \label{lem:path}
  Let $G$ be a graph, $k$ be a positive integer and
  $(u_0,\ldots,u_{3k})$ be an isometric path in $G$. Let \mbox{$P=\{u_{3i} | i \in \llbracket
  0,k \rrbracket \}$} be the
  set of every third vertex on this path. Then, for any positive integer $r$ and any ball
  $B$ of radius $r$ in~$G$,
  \begin{equation*}
    |B \cap P| \leq \left\lceil \frac{2r+1}{3} \right\rceil.
  \end{equation*}
\end{lemma}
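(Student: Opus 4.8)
The plan is to fix a ball $B = N_r(v)$ and show that it cannot contain more than $\lceil (2r+1)/3 \rceil$ of the marked vertices $u_{3i}$. The crucial observation is that since $(u_0,\ldots,u_{3k})$ is an isometric path, the distance between $u_a$ and $u_b$ along the path equals their distance in $G$, namely $|a-b|$. Consequently, if $B$ contains two marked vertices $u_{3i}$ and $u_{3j}$ with $i<j$, then by the triangle inequality $3(j-i) = d(u_{3i},u_{3j}) \le d(u_{3i},v) + d(v,u_{3j}) \le 2r$. More usefully, the set of indices $a \in \llbracket 0,3k\rrbracket$ with $u_a \in B$ is ``interval-like'' in the following sense: if $u_a, u_c \in B$ with $a < c$, I claim every $u_b$ with $a \le b \le c$ also lies in $B$. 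This is because $u_b$ lies on an isometric subpath from $u_a$ to $u_c$, and a ball in a graph is not generally convex, so this claim needs care — I would instead argue directly that $d(v,u_b) \le \max(d(v,u_a), d(v,u_c))$ fails in general, so the better route is simply to bound the diameter of the index set.

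So the first real step: let $I = \{ a : u_a \in B\}$. For any $a,c \in I$ we have $|a-c| = d(u_a,u_c) \le 2r$, hence $I$ is contained in an interval of $\mathbb{Z}$ of length at most $2r$, i.e. $I \subseteq \llbracket m, m+2r \rrbracket$ for some $m$. The marked vertices correspond to indices that are multiples of $3$ lying in $\llbracket 0, 3k\rrbracket$. Therefore $|B \cap P|$ is at most the number of multiples of $3$ in an interval of $2r+1$ consecutive integers, which is $\lceil (2r+1)/3 \rceil$.

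The one subtlety to handle carefully — and the main obstacle — is the interaction with the endpoints $0$ and $3k$ of the path: the interval $\llbracket m, m+2r\rrbracket$ must be intersected with $\llbracket 0, 3k\rrbracket$, but intersecting with a sub-interval only decreases the count, so the bound $\lceil (2r+1)/3\rceil$ still holds. A second point worth stating explicitly: the number of integers divisible by $3$ in any window of $2r+1$ consecutive integers is at most $\lceil (2r+1)/3 \rceil$; this is the elementary fact that among $n$ consecutive integers the count of multiples of $3$ is either $\lfloor n/3\rfloor$ or $\lceil n/3 \rceil$, and here $n = 2r+1$. I would verify the small cases $r=1$ ($\lceil 3/3\rceil = 1$, and indeed a ball of radius $1$ meets $P$ in at most one vertex since consecutive marked vertices are at distance $3$) as a sanity check, then conclude. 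The whole argument rests entirely on the isometry of the path translating graph distance into index difference; once that is in hand the rest is counting.
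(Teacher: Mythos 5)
Your proof is correct and follows essentially the same route as the paper: use the isometry of the path to translate the bound $d(u_a,u_c)\leq 2r$ for vertices of $B$ into the statement that the relevant indices lie in a window of $2r+1$ consecutive integers, and then count multiples of $3$ in that window. The brief detour about convexity of balls is unnecessary (and rightly abandoned); the interval bound on the index set, which you do give, is all that is needed, exactly as in the paper.
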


\begin{proof}
  Let $B$ be a ball of radius $r$ in $G$, then any two vertices in $B$
  are at distance at most $2r$. Since the path $(u_0,\ldots,u_{3k})$
  is isometric the intersection of the path and $B$ is included in a
  subpath of length $2r$. This subpath contains at most $2r+1$
  vertices and only one third of those vertices can be in $P$.
\end{proof}

 Any positive integer $r$ is greater than or equal to $\left\lceil
 \frac{2r+1}{3} \right\rceil$. Thus, Lemma~\ref{lem:path} ensures that
 $P$ is a valid multipacking of size $k+1$. We have the following  
 (see also~\cite{dun_al_2006}):
 
\begin{proposition}
For any graph $G$,
  \begin{equation*}
    \Mp(G) \geq \left\lceil\frac{\D(G)+1}{3}\right\rceil.
  \end{equation*}
\end{proposition}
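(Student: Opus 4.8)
The plan is to extract the desired multipacking from a \emph{diametral path}, that is, a shortest path $(v_0,\dots,v_d)$ between two vertices lying at distance $d := \D(G)$. Such a path is isometric, and so is each of its subpaths, since any subpath of a shortest path is again a shortest path. I would first dispose of the degenerate cases: if $d \leq 2$ then $\lceil (d+1)/3 \rceil = 1$, and since any single vertex forms a multipacking, the inequality holds trivially whenever $G$ is nonempty.

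For $d \geq 3$, set $k := \lfloor d/3 \rfloor \geq 1$ and consider the prefix $(v_0, \dots, v_{3k})$ of the diametral path; this is an isometric path of length $3k$, to which Lemma~\ref{lem:path} applies. Taking $P := \{v_{3i} : i \in \llbracket 0,k \rrbracket\}$, the lemma gives $|B \cap P| \leq \lceil (2r+1)/3 \rceil \leq r$ for every positive integer $r$ and every ball $B$ of radius $r$, so $P$ is a multipacking; it has $k+1$ vertices, whence $\Mp(G) \geq k+1$.

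Finally I would check the elementary identity $\lfloor d/3 \rfloor + 1 = \lceil (d+1)/3 \rceil$ by splitting into the three residues of $d$ modulo $3$, which upgrades the previous bound to $\Mp(G) \geq \lceil (d+1)/3 \rceil$ and completes the proof. I do not anticipate any real obstacle here; the only points deserving a word of justification are the stability of isometry under passing to subpaths and the fact that Lemma~\ref{lem:path} cannot be invoked when $d < 3$ (its hypothesis requires $k$ to be a positive integer), which is precisely why that case is peeled off at the outset.
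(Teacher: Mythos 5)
Your argument is correct and is exactly the paper's (implicit) one: the remark preceding the Proposition notes that $r \geq \lceil (2r+1)/3 \rceil$, so by Lemma~\ref{lem:path} every third vertex of an isometric path of length $3k$ taken along a diametral path is a multipacking of size $k+1$, which yields $\lceil (\D(G)+1)/3 \rceil$. Your extra care with the case $d \leq 2$ and the floor/ceiling identity is fine but adds nothing beyond the paper's route.
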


Building on this idea, we have the following result.

\begin{theorem}
  \label{thm:main}
  Given a graph $G$ and two positive integers $k$ and $k'$ such that
  \mbox{$k' \leq k$}, if there are four vertices $x,y,u$ and $v$ in $G$ such
  that 
  \begin{equation*} d_G(x,u) = d_G(x,v) = 3k \text{, } d_G(u,v) = 6k \text{ and }d_G(x,y) = 3k
  + 3k',
  \end{equation*}
  then
  \begin{equation*}
    \Mp(G) \geq 2k + k'.
  \end{equation*}
\end{theorem}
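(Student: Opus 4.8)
The plan is to construct, under the stated hypotheses, an explicit multipacking of $G$ of cardinality $2k+k'$. The only tool needed is the counting principle behind Lemma~\ref{lem:path}: every third vertex of an isometric path is a multipacking, and more generally a union of such ``every-third'' sets is a multipacking as soon as every ball of radius $r$ meets it in at most $r$ vertices in total (this being the defining condition). So the whole argument reduces to exhibiting the right isometric paths and then checking balls.

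First I would fix a shortest $u$--$v$ path $Q=(q_0=u,q_1,\dots,q_{6k}=v)$. Since $d_G(u,x)+d_G(x,v)=6k=d_G(u,v)$, the vertex $x$ lies on such a path, so we may take $q_{3k}=x$. By Lemma~\ref{lem:path}, the set $A=\{q_{3i} : 0\le i\le 2k\}$ of every third vertex of $Q$ is a multipacking with $|A|=2k+1$; this already gives $\Mp(G)\ge 2k+1$ and settles the case $k'=1$, so from now on assume $k'\ge 2$. It remains to add $k'-1$ further vertices, situated near $y$, without destroying the multipacking property.

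To that end I would fix a shortest $x$--$y$ path $R=(r_0=x,r_1,\dots,r_{3k+3k'}=y)$, chosen so as to overlap $Q$ as little as possible, and let $z$ be the last vertex of $R$ (reading from $x$) that still lies on $Q$; set $j=d_G(x,z)$, so $0\le j\le 3k$. Then the portion of $R$ from $z$ to $y$ is a geodesic of length $3k+3k'-j\ge 3k'$ meeting $Q$ only at $z$, and $z$ sits at distance $3k+j$ or $3k-j$ from $u$ along $Q$ (and the complementary value from $v$). I would take $C$ to consist of every third vertex on this $z$--$y$ geodesic, started far enough beyond $z$ that the vertices of $C$ lie at large distance from all of $A$, and truncated so that $|C|=k'-1$; the inequality $3k+3k'-j\ge 3k'$ guarantees enough room. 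The candidate multipacking is $P=A\cup C$, of size $2k+k'$. (Equivalently, one may view $P$ as coming from the geodesic ``spider'' centred at $z$ with legs toward $u$, $v$ and $y$, with every third vertex taken on each leg, the centre excluded.)

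Finally I would verify that $P$ is a multipacking. For a ball $B$ of radius $r$ meeting only $A$ or only $C$, Lemma~\ref{lem:path} and the remark after it give $|B\cap P|\le\lceil(2r+1)/3\rceil\le r$. The crux is a ball $B$ meeting both: all vertices of $B$ are pairwise within distance $2r$, while every vertex of $A$ is within $3k$ of $x$ and every vertex of $C$ lies at a distance from $z$ (and hence from $A$) that is bounded below in terms of $k$, so $B$ is forced to have large radius; at the same time $B\cap A$ lies in a window of length $2r$ along $Q$ and $B\cap C$ in a window of length $2r$ along $R$, so the two contributions add up to at most $r$. I expect the real difficulty to be precisely this balancing act — pinning down how far out $C$ must start as a function of $k$, and handling the degenerate positions of $z$, in particular $j$ close to $3k$ (where $z$ is near $v$ and the two arms of $P$ crowd together near $v$, so that one must instead anchor the second piece on the $u$-side, using $d_G(u,y)\ge d_G(u,v)-d_G(v,y)$), as well as making sure the relevant spider is genuinely isometric (no shortcut bypassing $z$). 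A short case analysis on $j$ and on which of $d_G(u,y)$, $d_G(v,y)$ is larger should take care of this; the remaining metric bookkeeping is routine.
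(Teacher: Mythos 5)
Your construction is essentially the paper's: every third vertex on an isometric $u$--$v$ path through $x$ (giving $2k+1$ vertices, all within distance $3k$ of $x$), plus $k'-1$ further vertices taken every third step along an $x$--$y$ geodesic beyond distance $3k$ from $x$. (The detour through $z$, the last common vertex with $Q$, is unnecessary: disjointness of the two pieces already follows from their distances to $x$, and the argument never needs the two geodesics to be internally disjoint.) The problem is that what you defer as ``routine metric bookkeeping'' is in fact the entire content of the proof, and your sketch of it does not go through as stated. You propose to start $C$ ``far enough beyond $z$ that the vertices of $C$ lie at large distance from all of $A$.'' But the only lower bound available on the distance from a tail vertex to a vertex of $A$ is the triangle inequality through $x$: a vertex at distance $3k+t$ from $x$ along $R$ is only guaranteed to be at distance at least $t$ from $u$ and from $v$, since the geodesic toward $y$ may curl back toward them. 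The tail beyond distance $3k$ has length only $3k'$, and you must fit $k'-1$ selected vertices three apart on it, so you can afford an offset of at most $6$; hence ``large distance'' is unattainable, and a ball meeting both pieces need not have radius large compared with the gap between them. Consequently the assertion that ``the two contributions add up to at most $r$'' is precisely what has to be proved, and it is not a separation argument: in the paper it requires a case analysis on the outermost tail vertex of $B$ (Cases 1, 2, 2.1, 2.2), including a matching of the two extremities of $P_1$ into pairs at distance at least $2r+1$ to cap $|B\cap P_1|$ when the ball is large.

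A telling symptom of the gap is that the hypothesis $k'\le k$ is never used in your argument, yet it is essential: together with the bound $l+2\le k'$ on the tail index it gives $r\le 2k+l+1$ (the paper's Inequality~\eqref{eq:nice}), which is what closes the hardest case. Any complete verification must invoke it somewhere. Also, your concern about degenerate positions of $z$ and re-anchoring the second piece on the $u$-side is a complication the paper's bookkeeping avoids entirely, since only distances to $x$ (and to the outermost selected tail vertex) enter the estimates. In short: right construction, but the verification that $P$ is a multipacking --- which is the theorem --- is missing, and the separation heuristic you offer in its place cannot be made to work within the length budget of the $x$--$y$ geodesic.
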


\begin{proof}
  Let $(u_{-3k},\ldots,u_0,\ldots,u_{3k})$ be the vertices of an
  isometric path from $u$ to $v$ going through $x$. Note that $u =
  u_{-3k}$, $x = u_0$ and $v = u_{3k}$. We shall select every third
  vertex of this isometric path and let $P_1$ be the set $\{u_{3i}
  | i \in \llbracket -k, k \rrbracket\}$.

  We thus have already selected $2k+1$ vertices. In order to complete
  our goal, we need $k'-1$ additional vertices. Let $(x_0,\ldots,x_{3k + 3k'})$
  be the vertices of an isometric path from $x$ to $y$. Note that $x =
  x_0$ and $y = x_{3k+3k'}$. We shall select every third vertex on
  this isometric path starting at $x_{3k+6}$.  Formally, we let $P_2$
  be the set $\{ x_{3k+3(i+2)} | i \in \llbracket 0,k'-2
  \rrbracket\}$. Finally, we let $P$ be the union of $P_1$ and
  $P_2$. An illustration of this is displayed in
  Figure~\ref{fig:firstscheme}.
  
  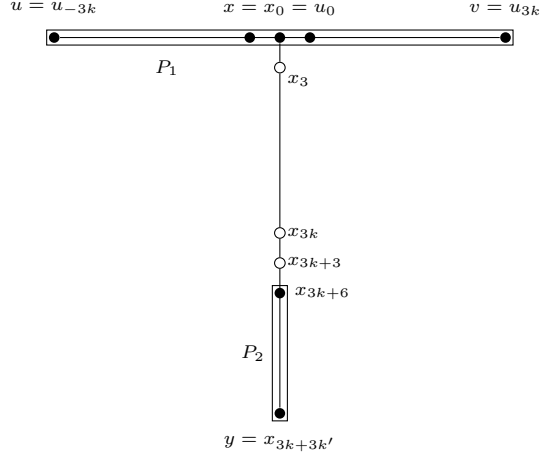
\begin{figure}[ht]
    \scriptsize
    \begin{center}
      \begin{tikzpicture}
        \node[vertex] (u) at (-3,0) {};
        \node[vertex] (v) at (3,0) {};
        \node[vertex] (x) at (0,0) {};
        \node[vertex] (y) at (0,-5) {};
        \node[vertex] (x3k6) at (0,-3.4) {};
        \node[unselected] (x3k) at (0,-2.6) {};
        \node[unselected] (x3k3) at (0,-3) {};
        \node[vertex] (um3) at (-.4,0) {};
        \node[vertex] (u3) at (.4,0) {};
        \node[unselected] (x3) at (0,-.4) {};
        \draw (u) -- (v);
        \draw (x) -- (x3) -- (x3k) -- (x3k3) -- (y);
        \node[above] at ($(x) + (0,.2)$) {$x = x_0 = u_0$};
        \node[above] at ($(u) + (0,.2)$) {$u = u_{-3k}$};
        \node[above] at ($(v) + (0,.2)$) {$v = u_{3k}$};
        \node[below] at ($(y) + (0,-.2)$) {$y = x_{3k+3k'}$};
        \node[below right] at ($(x3) + (0,0)$) {$x_3$};
        \node[right] at (x3k) {$x_{3k}$};
        \node[right] at ($(x3k6)+(.1,0)$) {$x_{3k+6}$};
        \node[right] at (x3k3) {$x_{3k+3}$};
        \node[left] at (-.1,-4.2) {$P_2$};
        \node[below] at (-1.5,-0.2) {$P_1$};
        \draw (-3.1,.1) rectangle (3.1,-.1);
        \draw (-.1,-3.3) rectangle (.1,-5.1);
      \end{tikzpicture}
    \end{center}
    \caption{Building of $P$.}
    \label{fig:firstscheme}
  \end{figure}

  Since every vertex of $P_2$ is at distance at least $3k + 6$ from
  $x$, while every vertex of $P_1$ is at distance at most $3k$ from
  $x$, we infer that $P_1$ and $P_2$ are disjoint. Thus $|P| =
  2k+k'$. We shall now prove that $P$ is a valid multipacking.

  Let $r$ be an integer between 1 and $|P| - 1$, and let $B$ be a ball
  of radius $r$ in $G$ (we do not care about the center of the
  ball). If this ball $B$ intersects only $P_1$ or only $P_2$, then we
  know by Lemma~\ref{lem:path} that it cannot contain more than $r$
  vertices of $P$. We may then consider that the ball $B$ intersects
  both $P_1$ and $P_2$. Let $l$ denote the greatest integer $i$ such
  that $x_{3k+3(i+2)}$ is in $B$ and in $P_2$. Let us name this vertex
  $z$. From this, we may say that
  \begin{equation}
    \label{eq:P2}
    |B \cap P_2| \leq l + 1
  \end{equation}

  Before ending this preamble, we state an easy
  inequality. For every integer $n$,
  \begin{equation}
    \label{eq:mod3}
    \left\lceil\frac{n}{3}\right\rceil \leq \frac{n}{3} + \frac{2}{3}
  \end{equation}

  We now split the remainder of the proof into two cases.

  \paragraph{Case 1: $3(l+2) \leq r$.} 
  In this case, we just use Lemma~\ref{lem:path} for
  $P_1$. We have 
  \begin{equation*}
    |B \cap P_1 | \leq \left\lceil \frac{2r + 1}{3} \right\rceil,
  \end{equation*}
  and by Inequality~\eqref{eq:mod3}, this quantity is bounded above by
  $\frac{2r+1}{3} + \frac{2}{3}$. We obtain with
  Inequality~\eqref{eq:P2},
  \begin{flalign*}
    &&|B \cap P| & \leq l+1 + \frac{2r+1}{3} + \frac{2}{3}&&\\
    && & \leq l+2 + \frac{2r}{3}&&\\
    && & \leq \frac{r}{3} + \frac{2r}{3} &\text{(by our case hypothesis)}&\\
    && & \leq r.&&
  \end{flalign*}
  Therefore, the ball $B$ contains at most $r$ vertices of $P$, as required.

  \paragraph{Case 2: $3(l+2) > r$.}
  Here we need some more insight. Recall that $l + 2 $
  cannot exceed $k'$ and that $k' \leq k$. Thus
  \begin{align*}
    r & < 3(l+2) \\
    & < 2k' + l +2\\
    & < 2k + l + 2,
  \end{align*}
  and since $r$ is an integer, we get 
  \begin{equation}
    \label{eq:nice}
    r \leq 2k + l + 1.
  \end{equation}
  
  We also note that any vertex $u_i$ for $|i| \leq 3k + 3(l+2) -
  (2r+1)$ is at distance at least $2r+1$ from $z$. By the triangle
  inequality $d(z,u_i) \geq d(z,x)-d(u_i,x)$, where $d(z,x)=3k + 3(l+2)$,
  and $d(u_i,x) = |i|$. Since the ball $B$ has
  radius $r$, no such vertex can be in $B$. Since we
  assumed that $B$ intersects $P_1$, not all the vertices of
  the $uv$-path are excluded from $B$. This means that 
  \begin{equation}
    \label{eq:nonzero}
    3k > 3k + 3(l+2) - (2r+1).
  \end{equation}
  We partition the vertices of $P_1$ into three sets: $U_L, U_M, U_R$.
  The vertex $u_i$ belongs to: $U_L$ if $i < -3k - 3(l+2) + 2(r+1)$;
  $U_M$ if $|i| \leq 3k + 3(l+2) - (2r+1)$; and $U_R$ if 
  $i > 3k + 3(l+2) - (2r+1)$.  See Figure~\ref{fig:case21}.
  The distance from $u = u_{-3k}$ to the first vertex (smallest positive index)
  in $U_R$ is then $6k + 3(l+2) - (2r+1) + 1$. We
  compare this distance with $2r+1$.
  
  \subparagraph{Case 2.1: $6k + 3(l+2) - (2r+1) + 1 \geq 2r+1$.}
  We match $U_L$ with $U_R$ so that each pair is at distance at
  least $2r+1$ (match $u_{-3k}$ with the first vertex in $U_R$ and so on, 
  as pictured in Figure~\ref{fig:case21}). Therefore the ball $B$ contains 
  at most one vertex of each matched pair. In other words, $B$ contains
  at most $\lceil |U_R|/3 \rceil$ vertices from $U_L \cup U_R$, and so
  \begin{equation*}
    |B \cap P_1| \leq \left\lceil \frac{3k - (3k + 3(l+2) -
      2r) + 1}{3} \right\rceil.
  \end{equation*}
  By using Inequality~\eqref{eq:P2} again,
  \begin{align*}
    |B \cap P| & \leq l+1 + \left\lceil \frac{2r+1}{3} \right\rceil - (l+2)\\
    & \leq r. 
  \end{align*}
  Therefore, the ball $B$ contains at most $r$ vertices of $P$, as required.

  \begin{figure}[ht]
    \begin{center}
      \scriptsize
      \subfigure[Case 2.1.]{
        \label{fig:case21}
        \begin{tikzpicture}[xscale=1.6]
          \node[vertex] (u) at (-3,0) {};
          \node[vertex] (v) at (3,0) {};
          \node[vertex] (x) at (0,0) {};
          \node[vertex] (um31) at (-2.4,0) {};
          \node[vertex] (u31) at (2.4,0) {};
          \draw (u) -- (v);
          \node[above] at ($(x) + (0,.2)$) {$u_0$};
          \node[left] at ($(u) + (-.2,0)$) {$u_{-3k}$};
          \node[right] at ($(v) + (.2,0)$) {$u_{3k}$};
          \node[below] at ($(-.5,-1.0) + (0,-.2)$) {$6k+3(l+2)-(2r+1)$};
          \node[below] at ($(-.95,-.5) + (0,-.2)$) {$2r+1$};
          \node[below] at ($(-2.65,0) + (0,-.2)$) {$U_L$};
          \node[below] at ($(2.65,0) + (0,-.2)$) {$U_R$};
          \node[below] at ($(0,0) + (0,-.2)$) {$U_M$};
          \draw (-2.1,.1) rectangle (2.1,-.1);
          \draw (-3.1,.1) rectangle (-2.2,-.1);
          \draw (2.2,.1) rectangle (3.1,-.1);
          \draw[dashed] (u) to[bend left] (u31);
          \draw[dashed] (um31) to[bend left] (v);
          \draw[arrows=<->] (-3,-.7) -- (1.1,-.7);
          \draw[arrows=<->] (-3,-1.2) -- (2.1,-1.2);
        \end{tikzpicture}
      } \quad 
      \subfigure[Case 2.2.]{
        \label{fig:case22}
        \begin{tikzpicture}[xscale=1.6]
          \node[vertex] (u) at (-3,0) {};
          \node[vertex] (v) at (3,0) {};
          \node[vertex] (x) at (0,0) {};
          \node[vertex] (um31) at (-2.4,0) {};
          \node[vertex] (u31) at (2.4,0) {};
          \draw (u) -- (v);
          \node[above] at ($(x) + (0,.2)$) {$u_0$};
          \node[left] at ($(u) + (-.2,0)$) {$u_{-3k}$};
          \node[right] at ($(v) + (.2,0)$) {$u_{3k}$};
          \node[below] at ($(-1,-.5) + (0,-.2)$) {$6k+3(l+2)-(2r+1)$};
          \node[below] at ($(-.4,-1) + (0,-.2)$) {$2r+1$};
          \node[below] at ($(-2.65,0) + (0,-.2)$) {$U'_L$};
          \node[below] at ($(2.65,0) + (0,-.2)$) {$U'_R$};
          \node[below] at ($(-1.6,0) + (0,-.2)$) {$U''_L$};
          \node[below] at ($(1.6,0) + (0,-.2)$) {$U''_R$};
          \node[below] at ($(0,0) + (0,-.2)$) {$U_M$};
          \draw (-1.1,.1) rectangle (1.1,-.1);
          \draw (-2.1,.1) rectangle (-1.2,-.1);
          \draw (1.2,.1) rectangle (2.1,-.1);
          \draw (-3.1,.1) rectangle (-2.2,-.1);
          \draw (2.2,.1) rectangle (3.1,-.1);
          \draw[dashed] (u) to[bend left] (u31);
          \draw[dashed] (um31) to[bend left] (v);
          \draw[arrows=<->] (-3,-.7) -- (1.1,-.7);
          \draw[arrows=<->] (-3,-1.2) -- (2.2,-1.2);
        \end{tikzpicture}
      }
    \end{center}
    \caption{Illustrations for Case 2.}
  \end{figure}
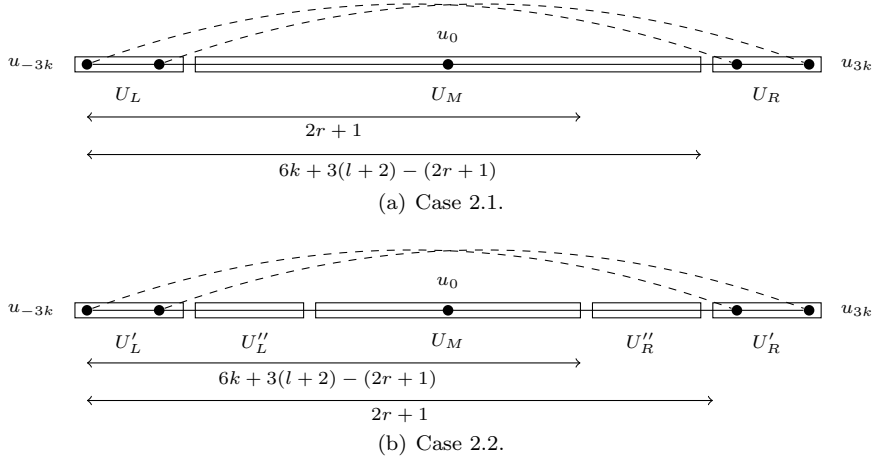
  
  \subparagraph{Case 2.2: $6k + 3(l+2) - (2r+1) + 1 < 2r+1$.}
  We partition each of $U_L$ and $U_R$ as shown
  in Figure~\ref{fig:case22}. The vertices that are
  distance at least $2r+1$ from a vertex in $U_L \cup U_R$ 
  are the sets $U'_L$ and $U'_R$,
  and those that are close to all other vertices are
  $U''_L$ and $U''_R$.
  We can match pairs of vertices $U'_L \cup U'_R$. 
  This allows us to say that the extremities of $P_1$ will
  contribute at most $\left\lceil \frac{6k - (2r+1) + 1}{3}
  \right\rceil$ which equals $2k + \lceil\frac{-2r}{3}\rceil$. Using
  again Inequality~\eqref{eq:mod3}, this is bounded above by $2k -
  \frac{2r}{3} + \frac{2}{3}$.

  For any integer $i$ between $6k + 3(l+2) - (2r+1) + 1$ and $2r$,
  vertices $u_{-i}$ and $u_{i}$ belong to $U''_L$ and $U''_R$ respectively.
  Such vertices may be in $B$. Since $P_1$ contains every third vertex on these two
  subpaths, this amounts to at most
  \begin{equation*}
    2 \left\lceil\frac{2r - 6k - 3(l+2) + (2r+1)}{3}\right\rceil
  \end{equation*}
  such vertices. This quantity is equal to 
  \begin{equation*}
    2\left\lceil \frac{4r+1}{3} \right\rceil -4k - 2(l+2),
  \end{equation*}
  which in turn, using Inequality~\eqref{eq:mod3} is bounded above by
  \begin{equation*}
    \frac{8r}{3} + 2 -4k -2(l+2).
  \end{equation*}
  
  By putting everything together, we derive that
  \begin{flalign*}
    && |B \cap P| & \leq (l+1) + \left(2k - \frac{2r}{3} + \frac{2}{3}\right) + \left(\frac{8r}{3} +2 -4k - 2(l+2)\right) &&\\
    &&& \leq 2r - 2k - l - \frac{1}{3}.&&
  \end{flalign*}
  But since $|B \cap P|$ is an integer, we may rewrite this last
  inequality as
  \begin{flalign*}&& |B \cap P| & \leq r + (r - 2k - l - 1) &&\\
    &&& \leq r. &\text{(by
    Inequality~\eqref{eq:nice})}&
  \end{flalign*}
  Thus, $|B \cap P|$ cannot exceed $r$ and the ball $B$ contains at most $r$ vertices of $P$, as required. This concludes the proof of Theorem~\ref{thm:main}.
\end{proof}

Theorem~\ref{thm:main} allows us to give a lower bound on the size of
a maximum multipacking in a graph in terms of its diameter and radius.

\begin{corollary}\label{coro:diam-rad}
  For any graph $G$ of diameter $d$ and radius r,
  \begin{equation*}
    \Mp(G) \geq \frac{d}{6} + \frac{r}{3} - \frac{3}{2}.
  \end{equation*}
\end{corollary}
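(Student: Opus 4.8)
The plan is to derive Corollary~\ref{coro:diam-rad} from Theorem~\ref{thm:main} by producing, for a graph of diameter $d$ and radius $r$, four vertices $x,y,u,v$ with the distance configuration required by the theorem, using parameters $k,k'$ that are roughly $d/6$ and $r/3$ respectively. First I would fix a diametral pair and a central vertex. Concretely, let $u$ and $v$ be two vertices with $d_G(u,v)=d$, and let $c$ be a vertex realizing the radius, so $\mathrm{ecc}(c)=r$. The natural candidate for $x$ is the ``midpoint'' of a shortest $uv$-path (a vertex at distance $\lfloor d/2\rfloor$ from $u$ along it), which lies at distance roughly $d/2$ from both $u$ and $v$; and the natural candidate for $y$ is a vertex far from $x$ — since $\mathrm{ecc}(x)\geq r$, there is some $y$ with $d_G(x,y)\geq r$.

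The main technical point is that Theorem~\ref{thm:main} demands \emph{exact} distances that are multiples of $3$, namely $d_G(x,u)=d_G(x,v)=3k$, $d_G(u,v)=6k$, and $d_G(x,y)=3k+3k'$, whereas what the graph hands us are only approximate and not divisible by $3$. So the real work is a rounding/truncation argument: one picks $k=\lfloor d/6\rfloor$ (or a nearby value) and then moves $x$, $u$, $v$, $y$ to suitable vertices \emph{on shortest paths} so that the distances become exactly $3k$, $6k$, and $3k+3k'$. Because a shortest path between two vertices at distance $D$ contains, for every $D'\le D$, a vertex at distance exactly $D'$ from one endpoint, we can always shrink a distance down to any desired smaller value by walking along a geodesic; the cost is that we lose a bounded additive amount, which is exactly what produces the $-\tfrac{3}{2}$ slack in the statement. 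Here one must be slightly careful that after relocating $x$ to make $d(x,u)=d(x,v)=3k$ we still have $d(u,v)=6k$ (i.e. that $x$ sits on a geodesic between the relocated $u$ and $v$), and that the new $x$ is still far enough from some vertex $y$ to extract $k'$ with $3k+3k'\le \mathrm{ecc}(x)$, roughly $k'\approx (r-3k)/3\approx r/3 - d/6$.

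Assembling the estimates: with $k\approx d/6$ and $k'\approx r/3-d/6$ we get from Theorem~\ref{thm:main}
\begin{equation*}
  \Mp(G)\ \geq\ 2k+k'\ \approx\ 2\cdot\frac{d}{6}+\Bigl(\frac{r}{3}-\frac{d}{6}\Bigr)\ =\ \frac{d}{6}+\frac{r}{3},
\end{equation*}
and carefully tracking the floor functions and the ``walk along a geodesic'' losses at each of the four vertices turns the $\approx$ into the claimed $\geq \frac{d}{6}+\frac{r}{3}-\frac{3}{2}$. I expect the bookkeeping of these rounding losses — making sure each relocation costs at most the amount the final constant can absorb, and that $k'\le k$ so that Theorem~\ref{thm:main} actually applies — to be the only real obstacle; the geometric setup itself is routine. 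One should also handle the degenerate small cases (e.g. $d$ or $r$ too small for a positive $k$ or $k'$) separately, noting the inequality is vacuous or trivial there since the right-hand side is then at most $1$.
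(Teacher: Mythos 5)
Your plan is essentially the paper's own proof: set $k=\lfloor d/6\rfloor$ and $k'$ so that $3k+3k'\le r<3k+3k'+3$, truncate a diametral geodesic to length exactly $6k$ with midpoint $x$, use the fact that every vertex has eccentricity at least $r$ to pick $y$ at distance exactly $3k+3k'$ from $x$ along a geodesic, apply Theorem~\ref{thm:main}, and let the rounding losses (at most $5/6+2/3$) account for the $-\tfrac{3}{2}$. The central vertex $c$ you introduce is not needed, and your remaining bookkeeping (including the check that $k'\le k$ and the degenerate small cases) matches what the paper's argument requires.
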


\begin{proof}
  We just pick the integer $k$ such that $d$ can be expressed as
  $6k + \alpha$ where $\alpha$ is in $\llbracket 0,5 \rrbracket$ and
  the integer $k'$ such that $r$ can be expressed as $3k +
  3k'+\beta$ where $\beta$ is in $\llbracket 0,2\rrbracket$.

  We must have two vertices at distance $6k$ in $G$. On a shortest path of length $6k$, the middle
  vertex has some vertex at distance $3k+3k'$. We can then apply
  Theorem~\ref{thm:main}.
  \begin{align*}
    \Mp(G) &\geq 2k + k'\\
    & \geq \frac{1}{3}(d - \alpha) + \frac{1}{3} \left(r - \beta - \frac{1}{2}(d - \alpha)\right)\\
    & \geq \frac{d}{6} + \frac{r}{3} - \frac{9}{6}.\qedhere
  \end{align*}
\end{proof}

We can now finalize the proof of our main theorem.

\begin{proof}[Proof of Theorem~\ref{thm:bounding}]
Since the diameter of a graph is always greater than or equal to its
radius, we conclude from Corollary~\ref{coro:diam-rad} that
$$
\frac{\R(G)-3}{2} \leq \Mp(G) \leq \Bc(G) \leq \R(G).
$$
Hence, for any graph $G$,
\begin{equation*}
  \Bc(G) \leq 2 \Mp(G) + 3,
\end{equation*}
proving Theorem~\ref{thm:bounding}.
\end{proof}


Note that in our proof, we chose the length of the long path to be a
multiple of~$6$ for the reading to be smooth. We think that the same
ideas implemented with more care would work for multiples of~$3$.
This might slightly improve the additive constant in our bound, but we
believe that it would not be enough to prove
Conjecture~\ref{conj:fac2} (while adding too much complexity to the
proof).

\section{Proving Conjecture~\ref{conj:fac2} when $\Mp(G)\leq 4$}\label{sec:discussion}

The following collection of results shows that
Conjecture~\ref{conj:fac2} holds for graphs whose multipacking number
is at most~$4$.

\begin{lemma}\label{lemma-distances}
Let $G$ be a graph and $P$ a subset of vertices of $G$. If, for every
subset $U$ of at least two vertices of $P$, there exist two vertices
of $U$ that are at distance at least $2|U|-1$, then $P$ is a
multipacking of $G$.
\end{lemma}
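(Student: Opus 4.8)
\textbf{Proof proposal for Lemma~\ref{lemma-distances}.}

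The plan is to verify the multipacking condition directly: I must show that for every vertex $v$ and every positive integer $r$, the ball $B = N_r(v)$ satisfies $|B \cap P| \leq r$. So I fix such a ball $B$ and set $U = B \cap P$; if $|U| \leq 1$ there is nothing to prove (since $r \geq 1$), so I may assume $|U| \geq 2$.

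The key observation is that any two vertices lying in a common ball of radius $r$ are at distance at most $2r$ from each other, by the triangle inequality through the center $v$. Now I apply the hypothesis to the set $U$ itself: since $|U| \geq 2$, there exist two vertices $a, b \in U$ with $d_G(a,b) \geq 2|U| - 1$. Combining these two facts gives $2|U| - 1 \leq d_G(a,b) \leq 2r$, hence $|U| \leq r + \tfrac{1}{2}$, and since $|U|$ is an integer, $|U| \leq r$. This is exactly the required bound, so $P$ is a multipacking.

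There is essentially no obstacle here — the argument is a one-line deduction from the triangle inequality plus the hypothesis applied to $U = B \cap P$. The only point requiring the slightest care is the rounding step ($2|U|-1 \le 2r$ forces $|U| \le r$ because $|U|$ is an integer, not merely $|U| \le r + 1/2$), and the trivial handling of the case $|U| \le 1$ so that the hypothesis (which only speaks about subsets of size at least two) is applicable. The direction of implication is worth noting: this lemma is a \emph{sufficient} condition for being a multipacking, which is the form needed for the constructions in this section — one builds a candidate set $P$ and checks the distance condition on all its subsets of size $\geq 2$, which is a finite check when $|P|$ is bounded (here $|P| \le 4$).
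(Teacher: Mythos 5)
Your proof is correct and is essentially the paper's argument run in the direct rather than contrapositive direction: both rest on applying the hypothesis to $U = B \cap P$ together with the fact that any two vertices in a ball of radius $r$ are at distance at most $2r$. The contrapositive phrasing in the paper and your direct phrasing (with the integrality rounding and the trivial $|U|\leq 1$ case) are interchangeable.
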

\begin{proof}
  We prove the contrapositive. Let $G$ be a graph and $P$ a subset of
  its vertices which is not a multipacking. Then there is a ball $B$
  of radius $r$ which contains $r+1$ vertices of $P$. 

  Let $U$ be the set $B \cap P$, then $U$ has size at least
  $r+1$. Moreover, any two vertices in $U$ are at distance at most
  $2r$ which is stricly smaller than $2|U|-1$.
\end{proof}

\begin{proposition}\label{prop:mp=3}
Let $G$ be a graph. If $\Mp(G)=3$, then $\Bc(G)\leq 6$.
\end{proposition}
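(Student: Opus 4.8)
The plan is to bound the broadcast number of $G$ in terms of its radius $\R(G)$, just as in the proof of Theorem~\ref{thm:bounding}, and then to show that when $\Mp(G)=3$ the radius cannot be too large. Recall that the single ball $N_{\R(G)}(c)$ centered at a central vertex $c$ is already a dominating broadcast, so $\Bc(G)\leq \R(G)$; hence it suffices to prove $\R(G)\leq 6$. Equivalently, I must show that if $\R(G)\geq 7$ then $G$ admits a multipacking of size at least~$4$, contradicting $\Mp(G)=3$.

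First I would extract, from a graph of large radius, the combinatorial data needed to build a large multipacking. Since $\D(G)\geq \R(G)$, a large radius forces a large diameter, and the Proposition immediately preceding Theorem~\ref{thm:main} gives $\Mp(G)\geq \lceil(\D(G)+1)/3\rceil$; this already settles the case $\D(G)\geq 11$. So the delicate regime is $\D(G)$ small (say between $7$ and $10$) while $\R(G)$ is as large as $7$. In that regime I would invoke Theorem~\ref{thm:main} (or, more conveniently, Corollary~\ref{coro:diam-rad}) with carefully chosen $k,k'$: take two vertices $u,v$ realizing a suitable even distance $6k$, let $x$ be the midpoint of a geodesic between them, and use a vertex $y$ far from $x$ (distance $3k+3k'$) to witness $\Mp(G)\geq 2k+k'$. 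The point is that a radius-$7$ graph with small diameter still has, from its central vertex, an eccentricity-$7$ vertex, and this combined with a diametral pair should push $2k+k'$ to at least~$4$.

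An alternative, and probably cleaner, route is to use Lemma~\ref{lemma-distances}: to certify $\Mp(G)\geq 4$ it is enough to exhibit four vertices $P=\{p_1,p_2,p_3,p_4\}$ such that every pair among them is at distance $\geq 3$, every triple contains a pair at distance $\geq 5$, and the whole set contains a pair at distance $\geq 7$. So the concrete goal becomes: in any graph with $\R(G)\geq 7$ (equivalently, after the easy cases, with $7\leq\D(G)\leq 10$ and $\R(G)=7$), locate four such vertices. I would start from a geodesic $P_0$ realizing the diameter and a central vertex $c$ with a vertex $w$ at distance $7$ from $c$; spreading points along these two geodesics (as in the proof of Theorem~\ref{thm:main}, Figure~\ref{fig:firstscheme}) and checking the finitely many distance inequalities required by Lemma~\ref{lemma-distances} should finish the argument.

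The main obstacle is the overlap between the two geodesics: the branch from $x$ (or $c$) toward $y$ (or $w$) may run close to the diametral path, so the naive "every third vertex" selection need not give pairwise distances $\geq 3$, and more importantly the triple/quadruple conditions of Lemma~\ref{lemma-distances} are not automatic. Handling this requires a short case analysis on where the far vertex $y$ attaches relative to the diametral geodesic — essentially the same phenomenon that forces the two-case (and sub-case) structure in the proof of Theorem~\ref{thm:main}. Because $\Mp(G)=3$ pins the relevant distances into a narrow window, this case analysis is finite and bounded, but it is where the real work lies; everything else is the routine "ball of radius $r$ meets an isometric path in a bounded arc" counting already packaged in Lemma~\ref{lem:path} and Lemma~\ref{lemma-distances}.
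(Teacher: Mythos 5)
Your reduction to showing that $\R(G)\geq 7$ forces $\Mp(G)\geq 4$ is sound (it is equivalent to the paper's contrapositive), but neither of your two routes actually gets there. The first route fails quantitatively: with $k'\leq k$, Theorem~\ref{thm:main} yields $2k+k'\geq 4$ only when $k\geq 2$, i.e.\ only when $G$ contains two vertices at mutual distance $12$ with an exact midpoint, which is impossible in your ``delicate regime'' $\D(G)\leq 10$; and Corollary~\ref{coro:diam-rad} with $d\leq 10$, $r=7$ gives only $\Mp(G)\geq 5/2$, hence $\Mp(G)\geq 3$, not $4$. This is precisely why the paper needs a separate argument for small multipacking number: Theorem~\ref{thm:bounding} by itself only gives $\Bc(G)\leq 9$ when $\Mp(G)=3$.

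Your second route (certifying $\Mp(G)\geq 4$ via Lemma~\ref{lemma-distances}) is indeed the paper's approach, but you stop where the proof begins: the case analysis you defer --- how the geodesic toward the far vertex interacts with the diametral path --- is the entire content, and it is not carried out. Moreover, the paper shows it can be avoided altogether by choosing the four points differently. Since $\Bc(G)\geq 7$ forces \emph{every} vertex to have eccentricity at least $7$, take $x$ arbitrary, $y$ at distance $7$ from $x$, let $u$ be the vertex at distance $3$ from $x$ on an $x$--$y$ geodesic (so $d(u,y)=4$), and let $v$ be a vertex at distance $7$ from $u$. The triangle inequality gives $d(x,v)\geq 4$ and $d(y,v)\geq 3$, so all six pairwise distances are at least $3$; and since every triple of $\{x,y,u,v\}$ contains one of the two distance-$7$ pairs $\{x,y\}$ or $\{u,v\}$, the hypotheses of Lemma~\ref{lemma-distances} (pairs at distance $\geq 3$, a pair at distance $\geq 5$ in each triple, a pair at distance $\geq 7$ overall) hold with no case analysis at all. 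No diametral pair, no central vertex, and no discussion of overlapping geodesics is needed --- only the eccentricity bound applied at the two well-chosen vertices $x$ and $u$.
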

\begin{proof}
  We prove the contrapositive again.  Let $G$ be a graph with
  broadcast number at least 7. Then, the eccentricity of any vertex is
  at least 7 (otherwise we could cover the whole graph by broadcasting
  with power 6 from a single vertex).

  Let $x$ be any vertex of $G$. There must be a vertex $y$ at distance
  7 from $x$. Let $u$ be any vertex at distance 3 from $x$ and on a
  shortest path from $x$ to $y$. Then $u$ is at distance 4 from
  $y$. But $u$ has also eccentricity at least 7. So there is a vertex
  $v$ at distance 7 from $u$. By the triangle inequality, $v$ is at
  distance at least 4 from $x$ and at least 3 from $y$. Therefore the
  set $\{u,v,x,y\}$ satisfies the condition of
  Lemma~\ref{lemma-distances} and the multipacking number of $G$ is at
  least 4 (and so it is not equal to 3).
\end{proof}

The following proposition improves Theorem~\ref{thm:bounding} for 
graphs $G$ with $\Mp(G) \leq 6$ and shows that
Conjecture~\ref{conj:fac2} holds when $\Mp(G) = 4$.

\begin{proposition}\label{prop:mp=4}
Let $G$ be a graph. If $\Mp(G)\geq 4$, then $\Bc(G)\leq 3\Mp(G)-4$.
\end{proposition}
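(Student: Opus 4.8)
The plan is to mimic the proof of Proposition~\ref{prop:mp=3}, but chasing a
slightly longer chain of eccentricity-forced vertices so that the hypotheses
of Lemma~\ref{lemma-distances} (or of Theorem~\ref{thm:main}) are met by a set
of size $\Mp(G)+1$. Concretely, I would argue the contrapositive: assume
$\Bc(G)\geq 3\Mp(G)-3$ and derive $\Mp(G)\geq\Mp(G)+1$, a contradiction. The
starting observation is the same as before: since the whole graph can be
covered by one ball centred at any vertex, the eccentricity of every vertex of
$G$ is at least $\Bc(G)\geq 3\Mp(G)-3$. Writing $m=\Mp(G)\geq 4$, every vertex
thus has eccentricity at least $3m-3 = 3(m-1)$.

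The core step is to build, greedily, a long ``zig-zag'' of vertices using this
eccentricity bound, in the spirit of the $x,y,u,v$ construction in
Proposition~\ref{prop:mp=3}. Pick $x_0$ arbitrary; it has a vertex $x_1$ at
distance $3(m-1)$; on a shortest $x_0x_1$-path pick the vertex at distance $3$
from $x_0$, call it $u_1$; then $u_1$ still has eccentricity at least
$3(m-1)$, so there is $x_2$ at distance $3(m-1)$ from $u_1$, and by the
triangle inequality $x_2$ is far (distance at least $3(m-1)-3$) from both
$x_0$ and $x_1$; iterate. After collecting enough such ``witness'' vertices
one obtains either a configuration matching Theorem~\ref{thm:main} with
parameters giving $\Mp(G)\geq 2k+k' \geq m+1$, or directly a set $U$ of
$m+1$ vertices which are pairwise at distance at least $2|U|-1 = 2m+1$, so that
Lemma~\ref{lemma-distances} certifies a multipacking of size $m+1$. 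The
bookkeeping to make ``pairwise at distance at least $2m+1$'' hold for all
$\binom{m+1}{2}$ pairs — rather than just for one pair as in
Lemma~\ref{lemma-distances}'s weakest usage — is where the argument has to be
done carefully: one wants the witnesses spread out enough that every pair, not
merely the extremal one, is separated, and this is exactly what the factor
$3m-3$ (as opposed to something smaller) is tuned to guarantee.

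I expect the main obstacle to be the case analysis needed to confirm that the
greedy zig-zag actually produces $m+1$ vertices that are pairwise far apart,
given only that each new vertex is far from \emph{one} previously chosen vertex
(its immediate ``anchor'') and only distance $3$ from another. One has to track
how distances accumulate along the path skeleton and verify, via repeated
triangle inequalities, that no two witnesses drift back to within $2m$ of each
other; for small residues of $\Bc(G)$ modulo $3$ this may require handling a
few boundary subcases by hand, much as Theorem~\ref{thm:main}'s proof splits
into Cases~2.1 and~2.2. The constraint $m\geq 4$ should be used precisely to
give enough ``room'' in these inequalities (for $m=2,3$ one already has the
separate bounds $\Bc(G)\leq 4$ trivially-ish and Proposition~\ref{prop:mp=3}).
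Once the $(m+1)$-point far-apart configuration is in hand, the conclusion is
immediate from Lemma~\ref{lemma-distances}, contradicting $\Mp(G)=m$ and hence
establishing $\Bc(G)\leq 3\Mp(G)-4$.
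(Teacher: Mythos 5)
There is a genuine gap, and it sits exactly where you postpone the work. Your plan hinges on producing either $m+1$ witnesses that are pairwise at distance at least $2m+1$, or a configuration for Theorem~\ref{thm:main} with $2k+k'\geq m+1$; neither can be extracted from the only hypothesis available, namely that every vertex has eccentricity at least $3m-3$. Already your first triangle-inequality claim fails: if $d(x_0,x_1)=3(m-1)$, $u_1$ is at distance $3$ from $x_0$ on that path, and $x_2$ is at distance $3(m-1)$ from $u_1$, then $d(x_2,x_0)\geq 3m-6$ but $d(u_1,x_1)=3m-6$ gives only $d(x_2,x_1)\geq 3$, so $x_2$ may sit right next to $x_1$. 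Each later witness is controlled only relative to its own anchor, and nothing stops it from collapsing onto earlier witnesses. More fundamentally, a set of $m+1$ vertices pairwise at distance $\geq 2m+1$ needs the graph to be ``long'' on the order of $m^2$, whereas eccentricity $3m-3$ gives only linear length; and the Theorem~\ref{thm:main} route, via Corollary~\ref{coro:diam-rad} with $d,r\geq 3m-3$, yields only $\Mp(G)\geq (3m-6)/2$, which exceeds $m$ only for $m\geq 8$ --- but Proposition~\ref{prop:mp=4} is of interest precisely for $m\in\{4,5,6\}$, where it beats Theorem~\ref{thm:bounding}. So the ``bookkeeping'' you defer is not merely delicate; in the relevant range it cannot be done.

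The paper's proof has a different shape that your zig-zag misses: it does not ask for pairwise-far vertices at all. It places $p$ of the $p+1$ multipacking vertices at mutual distance only $3$, namely $v_0,\dots,v_{p-1}$, every third vertex of a single isometric path of length $3p-3$ from $x$ to $y$, and then adds one outlier $u$ at distance at least $3p-3$ from $v_{p-2}$ (anchoring at $v_{p-2}$ rather than at $y$ is what yields $d(u,v_i)\geq 3+3i$ for $i\leq p-2$ and $d(u,v_{p-1})\geq 3p-6$). The multipacking property is then certified not by Lemma~\ref{lemma-distances} but by the counting Lemma~\ref{lem:path}: any ball of radius $r$ meets the path vertices in at most $\lceil (2r+1)/3\rceil\leq r$ points, adding $u$ costs at most one more, which is harmless for $r\geq 4$, and the radii $r=1,2,3$ for balls containing $u$ are checked by hand from the distance estimates. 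That ``one isometric path plus one far outlier, verified by the counting lemma'' idea is the missing ingredient; without it, Lemma~\ref{lemma-distances} is the wrong tool for a set most of whose members are mutually close.
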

\begin{proof}
  For a contradiction, let $G$ be a counterexample, that is a graph
  with multipacking number $p$ at least 4 while $\Bc(G)\geq
  3p-3$. Then, the eccentricity of any vertex of $G$ is at least
  $3p-3$ (otherwise we could broadcast at distance $3p-4$ from a
  single vertex). Let $x$ be a vertex of $G$ and let $V_i$ denote the
  set of vertices at distance exactly~$i$ of $x$. By our previous
  remark, $V_{3p-3}$ is non-empty. Let $y$ be a vertex in $V_{3p-3}$
  and consider a shortest path $P_{xy}$ from $x$ to $y$ in $G$. Let
  $v_0=x$, and for $1\leq i\leq p-1$, let $v_i$ be the vertex on
  $P_{xy}$ belonging to $V_{3i}$ (thus $v_{p-1}=y$).

  Now, since $\Bc(G)\geq 3p-3$, there must be a vertex $u$ at distance
  at least $3p-3$ of $v_{p-2}$ (otherwise we could broadcast from that
  single vertex). Note that the triangle inequality ensures that the
  distance between $u$ and $v_i$ is at least $3+3i$ for $i$ between
  $0$ and $p-2$. The distance from $u$ to $v_{p-1}$ is at least $3p-6$
  which is at least 6 since $p$ is at least 4. Consider the set
  $P=\{u,v_0,\ldots, v_{p-1}\}$. We claim that $P$ is a multipacking
  of $G$ of size $p+1$, which is a contradiction.

  Let $B$ be a ball of radius $r$. Since $P_{xy}$ is an isometric
  path, Lemma~\ref{lem:path} ensures us that $B$ contains at most
  \begin{equation*}
    \left\lceil \frac{2r+1}{3} \right\rceil
  \end{equation*}
  vertices from $P \cap P_{xy}$ which is smaller than $r$. When $B$
  does not include $u$, the ball is satisfied. For balls that contain
  vertex $u$, the maximum size of $P \cap B$ is
  \begin{equation*}
    \left\lceil \frac{2r+1}{3} \right\rceil + 1.
  \end{equation*}
  Whenever $r$ is 4 or more, this quantity does not exceed $r$. So
  every ball with radius $4$ or more is satisfied. We still need to
  check balls of radius 1,2, and 3 which contain $u$.
  \begin{itemize}
  \item Balls of radius 1 are easy to check since every vertex of $P_{xy}$
    is at distance at least 3 from $u$.
  \item For balls of radius 2, it is enough to check that there is only one
    vertex at distance 4 or less from $u$ in $P \cap P_{xy}$.
  \item For balls of radius 3, there is only one way to select $u$ and three
  vertices in $P \cap P_{xy}$ within distance 6 from $u$. We should
  take $v_0, v_1$ and $v_{p-1}$. But since $v_0$ and $v_{p-1}$ are at
  distance $3p-3$ from each other, they cannot appear simultaneously in
  a ball of radius 3 (since $p$ is at least 4, $3p-3$ is at least 9).
  \end{itemize}
  
  Therefore $P$ is a multipacking of size $p+1$, which is a
  contradiction.
\end{proof}

\begin{corollary}
Let $G$ be a graph. If $\Mp(G)\leq 4$, then $\Bc(G)\leq 2\Mp(G)$.
\end{corollary}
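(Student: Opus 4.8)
The plan is to split according to the value $\Mp(G)\in\{1,2,3,4\}$; three of the four cases follow immediately from results already established, and only $\Mp(G)=2$ requires a new idea. For $\Mp(G)=4$, Proposition~\ref{prop:mp=4} gives $\Bc(G)\le 3\Mp(G)-4=8=2\Mp(G)$, and for $\Mp(G)=3$, Proposition~\ref{prop:mp=3} gives $\Bc(G)\le 6=2\Mp(G)$. For $\Mp(G)=1$, I would first note that then $\D(G)\le 2$: otherwise two vertices at distance at least~$3$ would already form a multipacking of size~$2$. Consequently $\R(G)\le\D(G)\le 2$, and since $\Bc(G)\le\R(G)$ (the bound used in the proof of Theorem~\ref{thm:bounding}), we get $\Bc(G)\le 2=2\Mp(G)$.

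For the main case $\Mp(G)=2$ I would argue by contradiction and assume $\Bc(G)\ge 5$. Then $\R(G)\ge\Bc(G)\ge 5$, hence $\D(G)\ge 5$, so there are vertices $u,v$ with $d_G(u,v)\ge 5$. The two balls $N_2(u)$ and $N_2(v)$ would, if they covered $V(G)$, form a dominating broadcast of cost~$4$; since $\Bc(G)\ge 5$ this is impossible, so there is a vertex $z$ with $d_G(z,u)\ge 3$ and $d_G(z,v)\ge 3$ (in particular $u,v,z$ are pairwise distinct). I would then check that $\{u,v,z\}$ is a multipacking of size~$3$, contradicting $\Mp(G)=2$: for a ball $B$ of radius $r=1$, no two of the three vertices lie in $B$ since they are pairwise at distance at least~$3$; for $r=2$, the ball $B$ has diameter at most~$4$ and hence cannot contain both $u$ and $v$, so $|B\cap\{u,v,z\}|\le 2$; and for $r\ge 3$ we trivially have $|B\cap\{u,v,z\}|\le 3\le r$.

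The one genuinely delicate point — the ``hard part'' — is the case $\Mp(G)=2$, and more precisely the realisation that $\Bc(G)\ge 5$ must be exploited through the impossibility of covering $G$ with the \emph{two} radius-$2$ balls $N_2(u)\cup N_2(v)$, rather than merely through eccentricities: an eccentricity argument in the style of Proposition~\ref{prop:mp=4} only yields a third vertex far from \emph{one} of $u,v$, which is not enough. Using the two-ball obstruction instead produces a vertex simultaneously far from both $u$ and $v$, and then the pair $u,v$ at distance at least~$5$ automatically handles the radius-$2$ requirement for the multipacking $\{u,v,z\}$.
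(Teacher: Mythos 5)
Your proposal is correct, and for $\Mp(G)\in\{3,4\}$ it coincides with the paper: both invoke Proposition~\ref{prop:mp=3} (giving $\Bc(G)\leq 6$) and Proposition~\ref{prop:mp=4} (giving $\Bc(G)\leq 3\cdot 4-4=8$). The difference lies in the cases $\Mp(G)\leq 2$, which the paper simply attributes to Hartnell and Mynhardt~\cite{hartnell_2014} (their bound $\Bc(G)\leq 3\Mp(G)-2$ already yields $\Bc(G)\leq 4$ when $\Mp(G)=2$), whereas you prove them from scratch. Your $\Mp(G)=1$ argument (diameter at most~$2$, hence $\Bc(G)\leq\R(G)\leq 2$) and your $\Mp(G)=2$ argument are both sound: the key step, deducing from $\Bc(G)\geq 5$ that the two balls $N_2(u)$ and $N_2(v)$ cannot cover $V(G)$ and thus extracting a vertex $z$ far from both $u$ and $v$, is exactly the right use of the hypothesis, and the verification that $\{u,v,z\}$ is a multipacking of size~$3$ is correct (it could also be phrased via Lemma~\ref{lemma-distances}, since $u,v$ are at distance at least $5$ and all pairs are at distance at least~$3$). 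So your proof is a self-contained, more elementary alternative for the small cases, very much in the spirit of the contrapositive argument of Proposition~\ref{prop:mp=3}, at the cost of redoing work the paper delegates to a citation; the paper's version buys brevity, yours buys independence from~\cite{hartnell_2014}.
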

\begin{proof}
When $\Mp(G)\leq 2$, this is shown in~\cite{hartnell_2014}. The case $\Mp(G)=3$ is implied by Proposition~\ref{prop:mp=3}, and the case $\Mp(G)=4$ follows from Proposition~\ref{prop:mp=4}.
\end{proof}

\section{Concluding remarks}\label{sec:remarks}

We conclude the paper with some remarks.

\subsection{The optimality of Conjecture~\ref{conj:fac2}}

We know a
few examples of connected graphs $G$ which achieve the conjectured
bound, that is, $\Bc(G)=2\Mp(G)$. For example, one can easily check
that $C_4$ and $C_5$ have multipacking number~$1$ and broadcast
number~$2$. In Figure~\ref{fig:twoFour}, we depict three examples
having multipacking number~$2$ and broadcast number~$4$. By making
disjoint unions of these graphs, we can build further extremal graphs
with arbitrary multipacking number. However, if we only consider
connected graphs, we do not even know an example with multipacking
number~$3$ and broadcast number~$6$. Hartnell and
Mynhardt~\cite{hartnell_2014} constructed an infinite family of
connected graphs $G$ with $\Bc(G)=\tfrac{4}{3}\Mp(G)$, but we do not
know any construction with a higher ratio. Are there arbitrarily large
connected graphs that reach the bound of Conjecture~\ref{conj:fac2}?

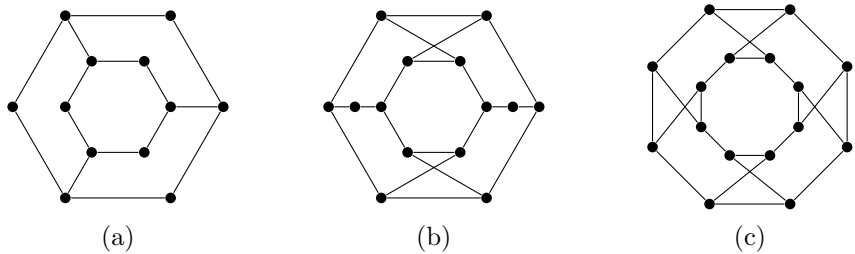
\begin{figure}[!ht]
\begin{center}
\scalebox{1.0}{\begin{tikzpicture}[join=bevel,inner sep=0.5mm,scale=0.7]
\node[vertex](0) at (0:1) {};
\node[vertex](1) at (60:1) {};
\node[vertex](2) at (120:1) {};
\node[vertex](3) at (180:1) {};
\node[vertex](4) at (240:1) {};
\node[vertex](5) at (300:1) {};
\node[vertex](a) at (0:2) {};
\node[vertex](b) at (60:2) {};
\node[vertex](c) at (120:2) {};
\node[vertex](d) at (180:2) {};
\node[vertex](e) at (240:2) {};
\node[vertex](f) at (300:2) {};

\draw[-] (0)--(1)--(2)--(3)--(4)--(5)--(0)
         (a)--(b)--(c)--(d)--(e)--(f)--(a)
         (0)--(a) (2)--(c) (4)--(e);

\node at (270:2.5) {(a)};

\begin{scope}[xshift=6cm]
\node[vertex](0) at (0:1) {};
\node[vertex](1) at (60:1) {};
\node[vertex](2) at (120:1) {};
\node[vertex](3) at (180:1) {};
\node[vertex](4) at (240:1) {};
\node[vertex](5) at (300:1) {};
\node[vertex](a) at (0:2) {};
\node[vertex](b) at (60:2) {};
\node[vertex](c) at (120:2) {};
\node[vertex](d) at (180:2) {};
\node[vertex](e) at (240:2) {};
\node[vertex](f) at (300:2) {};
\node[vertex](x) at (0:1.5) {};
\node[vertex](y) at (180:1.5) {};

\draw[-] (0)--(1)--(2)--(3)--(4)--(5)--(0)
         (a)--(b)--(c)--(d)--(e)--(f)--(a)
         (1)--(c) (2)--(b) (4)--(f) (5)--(e) (0)--(x)--(a) (3)--(y)--(d);         

\node at (270:2.5) {(b)};
\end{scope}

\begin{scope}[xshift=12cm,rotate=-22.5]
  \foreach \i in {0,1,2,3,4,5,6,7}
  {
    \node[vertex](x\i) at (45*\i:1) {};
    \node[vertex](y\i) at (45*\i:2) {};    
  }  
\draw[-] (x0)--(x1)--(x2)--(x3)--(x4)--(x5)--(x6)--(x7)--(x0)
         (y0)--(y1)--(y2)--(y3)--(y4)--(y5)--(y6)--(y7)--(y0)
         (x0)--(y1)  (y0)--(x1)  (x2)--(y3)  (y2)--(x3)
         (x4)--(y5)  (y4)--(x5)  (x6)--(y7)  (x7)--(y6);  
         
\node at (292.6:2.5) {(c)};         
\end{scope}
\end{tikzpicture}}
\end{center}
\caption{\label{fig:twoFour} Graphs with multipacking number $2$ and broadcast number $4$.
Graph (b) comes from L.~Teshima's Master Thesis~\cite{Teshima2012} and (c) was found by C.~R.~Dougherty (private communication).}
\end{figure}

\subsection{An approximation algorithm}

The computational complexity of broadcast domination has been
extensively studied, see for
example~\cite{Dabney2009,HeggernesLokshtanov2006} and references
of~\cite{Brewster2013,Teshima2012,Yang2015}. It is particularly
interesting to note that, unlike most other natural covering problems,
broadcast domination is solvable in polynomial (sextic)
time~\cite{HeggernesLokshtanov2006}. It is not known whether this is
also the case for multipacking, but a cubic-time algorithm exists for
strongly chordal graphs~\cite{Brewster2017,Yang2015}, as well as a
linear-time algorithm for
trees~\cite{Brewster2013,Brewster2017,Yang2015}. We note that our
proof of Theorem~\ref{thm:bounding}, being constructive, implies the
existence of a $(2+o(1))$-factor approximation algorithm for the
multipacking problem.

\begin{corollary}
There is a polynomial-time algorithm that, given a graph $G$,
constructs a multipacking of $G$ of size at least
$\frac{\Mp(G)-3}{2}$.
\end{corollary}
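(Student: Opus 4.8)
The plan is to extract the algorithm directly from the constructive chain of inequalities leading to Theorem~\ref{thm:bounding}, namely the sequence $\frac{\R(G)-3}{2} \leq \Mp(G) \leq \Bc(G) \leq \R(G)$. The key observation is that every step in the proof of Corollary~\ref{coro:diam-rad} (and hence of the lower bound on $\Mp(G)$) is algorithmic: it does not merely assert the existence of a large multipacking, it exhibits one explicitly from distance data in $G$. So the proof is essentially a matter of checking that each ingredient can be computed in polynomial time.

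First I would compute all pairwise distances in $G$ (a single all-pairs-shortest-paths computation, e.g. BFS from every vertex, running in $O(|V|\cdot|E|)$ time) and read off the diameter $d$ and radius $r = \R(G)$. Then, following the proof of Corollary~\ref{coro:diam-rad}, I would pick a vertex $x$ of eccentricity $r$ and a vertex $z$ with $d(x,z)=d$; write $d = 6k+\alpha$ with $\alpha\in\llbracket 0,5\rrbracket$ and $r = 3k+3k'+\beta$ with $\beta\in\llbracket 0,2\rrbracket$. I would then locate, on a shortest $xz$-path (computable by standard backtracking through the distance array), the midpoint-type vertex used in the corollary's proof to supply two vertices $u,v$ at distance $6k$ together with $x$ at distance $3k$ from each, plus a vertex $y$ with $d(x,y)=3k+3k'$ (such a $y$ exists because $x$ has eccentricity $r\geq 3k+3k'$). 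Feeding $x,y,u,v$ into the construction in the proof of Theorem~\ref{thm:main} yields, by choosing the two prescribed isometric paths and selecting every third vertex, an explicit multipacking $P = P_1\cup P_2$ of size $2k+k'$. Each isometric path is found by shortest-path backtracking, and selecting every third vertex is trivial, so the whole construction is polynomial.

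It then remains to verify that $2k+k' \geq \frac{\Mp(G)-3}{2}$, which is exactly the content of Corollary~\ref{coro:diam-rad} combined with $\Mp(G)\leq\Bc(G)\leq r$: indeed $\Mp(G) \geq \tfrac{d}{6}+\tfrac{r}{3}-\tfrac32 \geq \tfrac{r}{6}+\tfrac{r}{3}-\tfrac32 = \tfrac{r-3}{2} \geq \tfrac{\Bc(G)-3}{2} \geq \tfrac{\Mp(G)-3}{2}$, and the multipacking produced has size $2k+k'$, which the corollary's arithmetic shows is at least $\tfrac{d}{6}+\tfrac{r}{3}-\tfrac32$ (the computation there is $2k+k' \geq \tfrac13(d-\alpha)+\tfrac13(r-\beta-\tfrac12(d-\alpha)) \geq \tfrac{d}{6}+\tfrac{r}{3}-\tfrac96$). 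Hence the size bound is met.

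The only mild subtlety — not really an obstacle — is handling the small/degenerate cases where $k=0$ or $k'\leq 1$, so that the path-based construction in Theorem~\ref{thm:main} degenerates; in those regimes $\Mp(G)-3$ is small or negative and one can output a trivial multipacking (a single vertex, or even the empty set), which already satisfies the claimed lower bound. Apart from bookkeeping of these edge cases, everything is a direct polynomial-time transcription of the existence proof, so I expect the write-up to be short, with the main work being the explicit extraction of the four vertices and two isometric paths from the distance matrix.
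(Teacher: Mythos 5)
Your algorithm is exactly the paper's: compute $d$ and $\R(G)$, extract the four vertices of Theorem~\ref{thm:main} as in the proof of Corollary~\ref{coro:diam-rad}, take every third vertex of the two isometric paths, and get the approximation guarantee from $\Mp(G)\leq\Bc(G)\leq\R(G)$ together with the corollary's arithmetic. That part, including the size computation $2k+k'\geq \frac{d}{6}+\frac{r}{3}-\frac{3}{2}\geq\frac{\R(G)-3}{2}\geq\frac{\Mp(G)-3}{2}$, is fine and matches the paper.

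There is, however, a concrete error in your treatment of the degenerate cases. You claim that whenever $k=0$ or $k'\leq 1$, ``$\Mp(G)-3$ is small or negative'' so that a trivial multipacking (one vertex, or the empty set) meets the bound. This is true for $k=0$ (then $d\leq 5$, hence $\Mp(G)\leq\Bc(G)\leq\R(G)\leq 5$ and a single vertex suffices), but it is false for $k'\in\{0,1\}$ with $k$ large: on a long path with $6k+1$ vertices one has $d=6k$ and $\R=3k$, so $k'=0$ while $\Mp(G)$ is of order $2k$, and a one-vertex output falls far short of $\frac{\Mp(G)-3}{2}$. The correct (and easy) handling, implicit in the construction itself, is that when $k'\leq 1$ the set $P_2$ is empty and $P_1$ alone --- the $2k+1$ vertices $\{u_{3i}: i\in\llbracket -k,k\rrbracket\}$ of the diametral isometric path --- is a multipacking by Lemma~\ref{lem:path} of size $2k+1\geq 2k+k'$, which already achieves the bound of Corollary~\ref{coro:diam-rad}; so the algorithm should output $P_1$ there, not a trivial set. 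A smaller wording slip: you start by requesting a vertex $x$ of eccentricity $\R(G)$ together with $z$ at distance $d$ from $x$; such a pair need not exist (a central vertex is generally not a diametral endpoint). What is actually needed, as in the paper, is any pair of vertices at distance $6k$; the midpoint of a shortest path between them has eccentricity at least $\R(G)\geq 3k+3k'$, which supplies $y$.
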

\begin{proof}
  To construct the multipacking, one first needs to compute the radius
  $r$ and diameter $d$ of the graph $G$. Then, as described in the
  proof of Corollary~\ref{coro:diam-rad}, we compute $\alpha$ and $k$,
  and find the four vertices $x$, $y$, $u$, $v$ and the two isometric
  paths $P_1$ and $P_2$ described in Theorem~\ref{thm:main}. Finally,
  we proceed as in the proof of Theorem~\ref{thm:main}, that is, we
  essentially select every third vertex of these two paths to obtain
  the multipacking $P$. All distances and paths can be computed in
  polynomial time using classic methods. By
  Corollary~\ref{coro:diam-rad}, $P$ has size at least
  $\frac{\R(G)-3}{2}$. Since $\Mp(G)\leq \R(G)$, the
  approximation factor follows.
\end{proof}


\end{document}